\newtheorem{theorem}{Theorem}[section]
\newtheorem{lemma}[theorem]{Lemma}
\newtheorem{proposition}[theorem]{Proposition}
\newtheorem{corollary}[theorem]{Corollary}
\newtheorem{remark}[theorem]{Remark}
\newcommand{\Z}{\mathbb{Z}}
\newcommand{\R}{\mathbb{R}}
\newcommand{\F}{\mathbb{F}}
\begin{document}

\title{Character sums over unions of intervals}
\author{Xuancheng Shao}

\maketitle

\begin{abstract}
Let $q$ be a cube-free positive integer and $\chi\pmod q$ be a
non-principal Dirichlet character. Our main result is a Burgess-type
estimate for $\sum_{n\in A}\chi(n)$, where $A\subset [1,q]$ is the
union of $s$ disjoint intervals $I_1,\cdots,I_s$. We obtain a
nontrivial estimate for the character sum over $A$ whenever
$|A|s^{-1/2}>q^{1/4+\epsilon}$ and each interval $I_j$ ($1\leq j\leq
s$) has length $|I_j|>q^{\epsilon}$ for any $\epsilon>0$. This
follows from an improvement of a mean value Burgess-type estimate
studied by Heath-Brown \cite{heath-brown}.
\end{abstract}

\section{Introduction}

Let $\chi\pmod q$ be a primitive Dirichlet character. Write
\[ S(N;H)=\sum_{N<n\leq N+H}\chi(n). \]
The well-known estimates of Burgess
\cite{burgess1,burgess2,burgess3} say that for any positive integer
$r\geq 2$ and any positive real $\epsilon>0$,
\[ S(N;H)\ll_{\epsilon,r}H^{1-1/r}q^{(r+1)/(4r^2)+\epsilon} \]
uniformly in $N$, providing that either $q$ is cube-free or $r\leq
3$. In particular, this gives a nontrivial estimate for $S(N;H)$ as
long as $H>q^{1/4+\epsilon}$ for any $\epsilon>0$. Under the
assumption of the Generalized Riemann Hypothesis (GRH), nontrivial
estimates for $S(N;H)$ can be obtained in the much wider region
$H>q^{\epsilon}$ for any $\epsilon>0$. However, Burgess's bound
remains the best unconditional result for around 50 years.

Our main purpose is to prove the following mean value Burgess-type
estimate, which includes the Burgess bound as a special case.

\begin{theorem}\label{thm:main}
Let $r$ be a positive integer and $\epsilon>0$ be real. Suppose that
$\chi\pmod q$ is a primitive Dirichlet character, and let $H\leq q$
be a positive integer. Let $0\leq N_1<N_2<\cdots<N_J<q$ be integers
with the spacing condition
\[ N_{j+1}-N_j\geq H. \]
Then \begin{equation}\label{2r} \sum_{j=1}^J\max_{h\leq
H}|S(N_j;h)|^{2r}\ll_{\epsilon,r} q^{1/2+1/(2r)+\epsilon}H^{2r-2}
\end{equation}
under any one of the following three conditions
\begin{enumerate}
\item[(a)] $r=1$;
\item[(b)] $r\leq 3$ and $H\geq q^{1/(2r)}$;
\item[(c)] $q$ is cube-free and $H\geq q^{1/(2r)}$.
\end{enumerate}
\end{theorem}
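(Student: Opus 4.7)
The plan is to bound $\sum_j\max_h|S(N_j;h)|^{2r}$ via the Burgess shift method adapted to the mean-value setting. For each $j$, I fix $h_j\in[1,H]$ attaining the maximum and write $T_j=S(N_j;h_j)$ and $I_j=(N_j,N_j+h_j]$. The spacing condition ensures the $I_j$ are pairwise disjoint subsets of an interval of length at most $q+H$, and this disjointness drives everything that follows.

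For case (a), the target $\sum_j|T_j|^2\ll q^{1+\epsilon}$ can be handled directly. Expanding $|T_j|^2$ and using disjointness to bound $\sum_j\mathbf{1}_{n,m\in I_j}\leq 1$, then substituting the Gauss-sum expansion of $\chi$, converts the problem into a Parseval/large-sieve bound on exponential sums over $\bigcup_jI_j$. One then exploits the $H/q$-spacing of $\{N_j/q\}$ in the dual large-sieve inequality to obtain the stated estimate.

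For cases (b) and (c), assume $r\geq 2$ and $H\geq q^{1/(2r)}$. I would take positive integer parameters $A,B$ with $AB$ small compared to $H$. Averaging the shifted identity $T_j=S(N_j+ab;h_j)+O(ab)$ over $1\leq a\leq A$ with $(a,q)=1$ and $1\leq b\leq B$, and applying the multiplicative rewrite $\chi(n+ab)=\chi(a)\chi(\bar{a}n+b)$, produces
$$\Phi\cdot T_j=\sum_{\substack{1\leq a\leq A\\(a,q)=1}}\chi(a)\sum_{n\in I_j}W_B(\bar{a}n)+O(\Phi\cdot AB),$$
where $\Phi\asymp AB$ and $W_B(m)=\sum_{1\leq b\leq B}\chi(m+b)$. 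Raising to the $2r$-th power, applying H\"older's inequality to the joint $(a,n)$-sum, summing over $j$, and collapsing the $n$-sum to one over residues modulo $q$ (each contributing $O(1)$ points thanks to disjointness) reduces matters to the complete $2r$-th moment
$$\sum_{m\pmod q}|W_B(m)|^{2r}\ll_{\epsilon,r}qB^r+B^{2r}q^{1/2+\epsilon}.$$
This moment estimate follows from Weil's bound for complete character sums; it is unconditional for $r\leq 3$ (case (b)) and valid for cube-free $q$ (case (c)). Choosing $B^r\asymp Hq^{1/2-1/(2r)}$, which is legitimate precisely when $H\geq q^{1/(2r)}$, makes the diagonal term feed through to exactly $q^{1/2+1/(2r)+\epsilon}H^{2r-2}$; the shift error $J(AB)^{2r}$ is absorbed by taking $A$ small and using $J\leq q/H$.

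The hard part will be the secondary Weil term $B^{2r}q^{1/2}$ in the moment bound. Passed through the argument naively it contributes an error of order $H^{2r-1}q^{1/2}$, exceeding the target by a factor $H/q^{1/(2r)}\geq 1$. Resolving this is precisely the improvement over Heath-Brown's mean-value Burgess estimate. I would pursue it by arranging H\"older's inequality more carefully so as to exploit the disjoint union structure of $\bigcup_jI_j$: by first completing the sum over $n$ in $\bigcup_jI_j$ to a full residue sum modulo $q$ and only then applying H\"older to the resulting complete sum, one trades a power of $H$ for a factor of $q^{1/(2r)}$, reducing the offending $H^{2r-1}$ factor to $H^{2r-2}q^{1/(2r)}$ and matching the target bound.
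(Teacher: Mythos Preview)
Your treatment of case (a) is essentially the paper's Lemma~2.2 (the $r=1$ case of the P\'olya--Vinogradov mean value), so that part is fine. The difficulty is entirely in cases (b) and (c), and there your argument has a genuine gap at exactly the point you flag.

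You correctly compute that the naive Burgess shift plus H\"older over the $(a,n)$-sum of length $AH$ gives, after optimizing $B$, a main term $H^{2r-2}q^{1/2+1/(2r)}$ and a secondary term $H^{2r-1}q^{1/2}$, the latter too large by a factor $H/q^{1/(2r)}$. But your proposed fix, ``complete the $n$-sum to $\Z/q\Z$ and only then apply H\"older'', does not make this trade. Completing the $n$-sum replaces a sum of length $H$ by one of length $q$; applying H\"older at that stage introduces a factor $q^{2r-1}$ in place of $H^{2r-1}$, which is catastrophically worse, not better. There is no rearrangement of H\"older on a \emph{single} $(a,n)$-sum that recovers the missing factor, because the obstacle is not local to one $j$: what is needed is a bound on how often the dilates $\bar a\cdot I_j$ for varying $a$ and $j$ collide modulo $q$, and this is a genuinely arithmetic input that H\"older alone cannot supply.

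The paper's route is quite different from yours. Rather than bounding the $2r$-th moment directly, it follows Heath-Brown's variant of Burgess to bound the $r$-th moment by $q^{1/4+3/(4r)+\epsilon}H^{r-2}\mathcal{M}^{1/2}$, where $\mathcal{M}$ is a collision count for the shifted, dilated intervals (essentially $\sum_m \nu(m)^2$ for the multiplicity function $\nu$). Heath-Brown reduces $\mathcal{M}$ to a diophantine count $N(\ell,S,n)$ of solutions to $as-bt\equiv c\pmod\ell$ with $a,b,|c|\leq n$ and $s,t\in S$, where $S\subset\F_\ell$ encodes the positions $N_j$. The paper's new content is a sharp bound
\[
N(\ell,S,n)\ll \ell^{-1}n^3|S|^2+\ell^{\epsilon}n^2|S|,
\]
proved by Fourier analysis on $\F_\ell$ together with Poisson summation and Parseval (improving Heath-Brown's lattice-based bound which had $n^2|S|^{4/3}$ in place of $n^2|S|$). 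This yields
\[
\sum_j\max_h|S(N_j;h)|^r\ll q^{1/4+1/(4r)+\epsilon}H^{r-1}J^{1/2}+q^{-1/4+\epsilon}H^rJ,
\]
and then an induction on $r$, using the P\'olya--Vinogradov mean value lemma to start and to handle the range $H\leq q^{1/2(r-1)}$, converts this into the stated $2r$-th moment bound. None of these ingredients --- the collision count $\mathcal{M}$, the Fourier bound for $N(\ell,S,n)$, or the induction --- appears in your sketch, and they cannot be replaced by a reshuffling of H\"older in the direct moment argument.
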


This question has been investigated recently by Heath-Brown
\cite{heath-brown}, where the following inequality is obtained:
\begin{equation}\label{3r}
 \sum_{j=1}^J\max_{h\leq
H}|S(N_j;h)|^{3r}\ll_{\epsilon,r} q^{3/4+3/(4r)+\epsilon}H^{3r-3},
\end{equation}
under the same assumptions as those in Theorem \ref{thm:main}.
Heath-Brown attained $2r$ when $r=1$, and asked the question of
whether the exponent $3r$ in \eqref{3r} can be replaced by $2r$.
Theorem \ref{thm:main} is then an affirmative answer to this
question.

This improvement from $3r$ to $2r$ has a major consequence. The left
sides of the inequalities \eqref{3r} and \eqref{2r} have trivial
bounds $JH^{3r}$ and $JH^{2r}$, respectively. Some simple algebra
then reveals that \eqref{3r} is nontrivial for large $r$ when
$HJ^{1/3}>q^{1/4+\epsilon}$ for any $\epsilon>0$, whereas \eqref{2r}
is nontrivial for large $r$ when $HJ^{1/2}>q^{1/4+\epsilon}$ for any
$\epsilon>0$.

Theorem \ref{thm:main} easily implies a Burgess-type bound for
character sums over unions of intervals.

\begin{corollary}\label{cor:main}
Let $\epsilon>0$ be real. Suppose that $q$ is cube-free and let
$\chi\pmod q$ be a primitive Dirichlet character. Let $A\subset
[1,q]$ be a union of $s$ disjoint intervals $I_1,\cdots,I_s$, each
of which has length at least $q^{\epsilon}$. Suppose that
$|A|s^{-1/2}>q^{1/4+\epsilon}$. Then there exists
$\delta=\delta(\epsilon)>0$ such that
\begin{equation}\label{0} \sum_{n\in A}\chi(n)\ll_{\epsilon} |A|q^{-\delta}.
\end{equation}
\end{corollary}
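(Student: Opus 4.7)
The plan is to dyadically decompose the intervals of $A$ by length and apply Theorem~\ref{thm:main}(c) to each dyadic class, then combine the contributions via H\"older's inequality. Fix $r = \lceil 2/\epsilon\rceil$, so that $q^{1/(2r)} \leq q^{\epsilon/4} \leq q^{\epsilon}$. For each integer $k$ with $q^\epsilon \leq H_k := 2^k \leq q$, set $\mathcal{J}_k = \{j : H_k \leq |I_j| < 2H_k\}$ and write $s_k = |\mathcal{J}_k|$, $A_k = \sum_{j \in \mathcal{J}_k}|I_j|$; note that $s_k H_k \leq A_k < 2 s_k H_k$, $\sum_k s_k = s$, $\sum_k A_k = |A|$, and only $K = O(\log q)$ classes are nonempty.

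Within a single class $\mathcal{J}_k$, disjointness of the $I_j$ forces the starts $N_j$ to satisfy $N_{j+1} - N_j \geq |I_j| \geq H_k$. Splitting $\mathcal{J}_k$ into two subsequences alternating in position yields spacing $\geq 2H_k$, and applying Theorem~\ref{thm:main}(c) with parameter $H = 2H_k$ to each (permitted because $2H_k \geq q^\epsilon \geq q^{1/(2r)}$), then taking $h = |I_j| \leq 2H_k$, gives
\[
\sum_{j \in \mathcal{J}_k} \Bigl|\sum_{n \in I_j}\chi(n)\Bigr|^{2r} \ll_{\epsilon,r} q^{1/2 + 1/(2r) + \epsilon'} H_k^{2r-2}.
\]
H\"older's inequality on the $s_k$ terms, together with $s_k H_k \asymp A_k$, then bounds the class contribution:
\[
|T_k| := \Bigl|\sum_{j \in \mathcal{J}_k}\sum_{n \in I_j}\chi(n)\Bigr| \ll s_k^{1/(2r)} A_k^{1 - 1/r} q^{1/(4r) + 1/(4r^2) + \epsilon'/(2r)}.
\]

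Summing over the dyadic classes via the crude bound $s_k \leq s$ and the concavity estimate $\sum_k A_k^{1-1/r} \leq K^{1/r} |A|^{1-1/r}$ (Jensen applied to $x \mapsto x^{1-1/r}$) yields
\[
\Bigl|\sum_{n \in A}\chi(n)\Bigr| \leq \sum_k |T_k| \ll K^{1/r} |A| \cdot (s/|A|^2)^{1/(2r)} \, q^{1/(4r) + 1/(4r^2) + \epsilon'/(2r)}.
\]
The hypothesis $s/|A|^2 < q^{-1/2 - 2\epsilon}$ reduces the exponent of $q$ to $(1/(4r) + \epsilon'/2 - \epsilon)/r$; selecting $\epsilon' = \epsilon/4$ makes this of size $\asymp -\epsilon^2$, so the polylogarithmic factor $K^{1/r}$ is easily absorbed and one obtains \eqref{0} with $\delta = \delta(\epsilon) > 0$. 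The main technical point is the spacing compatibility in the second paragraph---Theorem~\ref{thm:main} requires $N_{j+1} - N_j \geq H$ while the dyadic class only forces spacing $\geq H/2$---which I resolve by the alternating-subsequence trick at the cost of a harmless constant. The hypothesis $|I_j| \geq q^\epsilon$ is used precisely to guarantee $H_k \geq q^\epsilon$ in every nonempty class, which in turn permits a single fixed choice of $r$ depending only on $\epsilon$.
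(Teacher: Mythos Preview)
Your argument is correct and follows essentially the same route as the paper: dyadic decomposition of the intervals by length, the odd/even split to recover the spacing hypothesis of Theorem~\ref{thm:main}, and then H\"older within each class. The only cosmetic difference is that the paper bounds each dyadic class by $|A|q^{-\delta}$ individually (using the trivial bound when the class is small) and absorbs the $O(\log q)$ classes into $\delta$, whereas you sum the class contributions via Jensen; both are routine.
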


This problem of getting nontrivial bounds for character sums over
unions of intervals has been considered before. Friedlander and
Iwaniec \cite{friedlander} proved \eqref{0} but with an additional
assumption that $A$ is contained in a relatively short interval. The
result of Chang \cite{chang} also gives \eqref{0} under a stronger
hypothesis on $|A|$ and $s$. Chang's method is different, though,
and can be also used to treat character sums over generalized
arithmetic progressions. Finally, note that Heath-Brown's result
\eqref{3r} would lead to $\eqref{0}$ assuming
$|A|s^{-2/3}>q^{1/4+\epsilon}$. Heuristically, square-root
cancelation for the character sum is expected over the $s$
intervals, and thus the set $A$ can be thought of having an
effective length $|A|s^{-1/2}$. This heuristic suggests that
Corollary \ref{cor:main} might be best possible (at least
unconditionally) under current techniques, as any improvement may
also lead to an improvement of Burgess's inequality.

The proof of Theorem \ref{thm:main} follows the initial argument of
Heath-Brown \cite{heath-brown}, where a variant of Burgess's method
was used to convert the problem to a diophantine problem. We refer
the reader to Section 3 of \cite{heath-brown} for this process. The
improvement is a consequence of a better bound for this diophantine
problem, as described below.

\begin{proposition}\label{prop:main}
Let $\ell$ be a prime and $S$ be a subset of $\F_{\ell}$. Let $n$ be
a positive integer. Denote by $N(\ell,S,n)$ the number of solutions
to the congruence $as-bt\equiv c\pmod\ell$ with $1\leq a,b\leq n$,
$|c|\leq n$, and $s,t\in S$. Then \begin{equation}\label{N}
N(\ell,S,n)\ll_{\epsilon} \ell^{-1}n^3|S|^2+\ell^{\epsilon}n^2|S|
\end{equation}
for any $\epsilon>0$.
\end{proposition}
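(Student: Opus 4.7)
Open up the congruence via Fourier analysis on $\F_\ell$. Writing $e_\ell(u)=e^{2\pi i u/\ell}$ and using orthogonality, we have
\[
N(\ell,S,n) \;=\; \frac{1}{\ell}\sum_{x\in\F_\ell} K(x)\,|A(x)|^2,\qquad K(x)=\sum_{|c|\le n}e_\ell(-xc),\quad A(x)=\sum_{a\in[1,n],\,s\in S}e_\ell(xas).
\]
The $x=0$ term contributes $(2n+1)n^2|S|^2/\ell$, which is the first term $\ll \ell^{-1}n^3|S|^2$ of the target bound. It therefore remains to show $\bigl|\sum_{x\neq 0}K(x)|A(x)|^2\bigr|\ll_\epsilon \ell^{1+\epsilon}n^2|S|$.

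\textbf{Reduction to a lattice-point count.} Expanding $|A(x)|^2=\sum_{s,t\in S}E(xs)\overline{E(xt)}$ with $E(y)=\sum_{a=1}^{n}e_\ell(ya)$ and swapping orders of summation, the task reduces to bounding
\[
\sum_{(s,t)\in S^2}\Bigl|\,g(s,t)-\tfrac{n^2(2n+1)}{\ell}\,\Bigr|\;\ll_\epsilon\;\ell^\epsilon n^2|S|,
\]
where $g(s,t):=\#\{(a,b)\in[1,n]^2 : as-bt\bmod\ell\in[-n,n]\}$. For each $(s,t)\not\equiv(0,0)\pmod\ell$, $g(s,t)$ is a lattice-point count in the lattice $\Lambda_{s,t}:=\{(a,b)\in\Z^2 : as\equiv bt\pmod\ell\}$ of determinant $\ell$, and standard estimates express the deviation of $g(s,t)$ from its expected value in terms of the successive minima $\mu_1\le\mu_2$ of $\Lambda_{s,t}$, which satisfy $\mu_1\mu_2\asymp\ell$. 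Degenerate pairs with $s\equiv 0$ or $t\equiv 0$ are handled separately by a trivial count and absorbed into the $\ell^\epsilon n^2|S|$ term.

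\textbf{Exceptional pairs and the main obstacle.} ``Generic'' pairs $(s,t)$---those with $\mu_1\asymp\sqrt\ell$---contribute an error of the required order after summing over $S^2$. The critical step, and the source of the improvement over \cite{heath-brown}, is the treatment of ``exceptional'' pairs for which $\Lambda_{s,t}$ contains an atypically short vector $(a_0,b_0)$. Such a short vector imposes the linear constraint $a_0 s\equiv b_0 t\pmod\ell$, so the exceptional pairs lie on a union of lines in $\F_\ell^2$; crucially, each such line meets $S^2$ in at most $|S|$ points, not $|S|^2$. To exploit this I would enumerate primitive short vectors $(a_0,b_0)$ up to a carefully chosen height and invoke the divisor bound $\tau(k)\ll_\epsilon k^\epsilon$ to account for their imprimitive multiples, producing an exceptional contribution of $\ll\ell^\epsilon n^2|S|$. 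Calibrating the height threshold, managing the divisor sum, and verifying that the resulting estimate beats the naive $|S|^2$ bound uniformly is the main technical difficulty.
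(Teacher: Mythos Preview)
Your Fourier opening is correct and coincides with the paper's. The divergence is in how you expand $|A(x)|^2$. You write $|A(x)|^2=\sum_{s,t\in S}E(xs)\overline{E(xt)}$, summing the interval variable $a$ inside $E$ and leaving the set variables $s,t$ outside; this sends you back to physical space and into a lattice-point problem over pairs $(s,t)\in S^2$. That is precisely the geometry-of-numbers route Heath-Brown took, which produced only $N\ll \ell^{-1}n^3|S|^2+|S|^2+n^2|S|^{4/3}\log\ell$. Your sketch does not supply the missing idea that would beat this. The observation ``each line $a_0s\equiv b_0t$ meets $S^2$ in at most $|S|$ points'' is elementary and already implicit in Heath-Brown's argument; used directly it gives only $N\le n^2(2n+1)\,|S|\asymp n^3|S|$, a full factor of $n$ away from the target $\ell^{\epsilon}n^2|S|$. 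The step you flag as ``the main technical difficulty'' --- calibrating the height threshold so that the exceptional contribution is $\ll\ell^{\epsilon}n^2|S|$ while the generic contribution is also acceptable --- is exactly where the lattice method has always stalled, and nothing in the proposal indicates how to get the extra factor-of-$n$ saving. (A minor point: your intermediate target $\sum_{(s,t)}|g(s,t)-n^2(2n+1)/\ell|\ll\ell^{\epsilon}n^2|S|$ is stronger than the proposition and should at least carry the main term $\ell^{-1}n^3|S|^2$ on the right-hand side as well.)

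The paper makes the opposite expansion: sum $s,t$ first, so that the object becomes $\sum_{a,b}\hat S(ar)\hat S(-br)$ in the frequency $r$, and then \emph{stays on the Fourier side}. A smooth majorant $\phi(c/n)$ for the interval $|c|\le n$, chosen with $\hat\phi$ compactly supported, forces (via Poisson summation) the frequency to lie in $|r|\le\ell/(5n)$. This truncation is the whole point: for such $r$ and $1\le a\le n$ one has $|ar|<\ell$, so the residue $ar\bmod\ell$ equals the integer $ar$, and any nonzero residue has only $O(\ell^{\epsilon})$ representations as such a product by the divisor bound. After $2|\hat S(ar)\hat S(-br)|\le|\hat S(ar)|^2+|\hat S(-br)|^2$, this collapses $\sum_{0<|r|\le\ell/(5n)}\sum_{|a|\le n}|\hat S(ar)|^2\ll\ell^{\epsilon}\sum_{u}|\hat S(u)|^2=\ell^{1+\epsilon}|S|$ by Parseval, and the proposition follows in two lines. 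There is no lattice analysis, no generic/exceptional dichotomy, and no height threshold to calibrate; the divisor bound on the product $ar$ replaces all of it.
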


In Section 4 of \cite{heath-brown}, Heath-Brown obtained the bound
\[ N(\ell,S,n)\ll \ell^{-1}n^3|S|^2+|S|^2+n^2|S|^{4/3}\log\ell \]
using the theory of lattices. The proof of Proposition
\ref{prop:main} uses Fourier analysis instead. Note that, in the
bound \eqref{N}, the first term $\ell^{-1}n^3|S|^2$ is the expected
main term, and the second term $\ell^{\epsilon}n^2|S|$ is also sharp
(apart from the $\ell^{\epsilon}$ factor). This can be seen by
taking $S$ to be a set of consecutive integers starting from $1$.

Throughout the paper the implied constants in the $O()$ and $\ll$
notations are always allowed to depend on $r$ and $\epsilon$. The
parameter $\epsilon$ appearing in different places are allowed to
differ. For a prime $\ell$ and a function
$f:\F_{\ell}\rightarrow\R$, we define its Fourier transform to be
\[ \hat{f}(r)=\sum_{x\in\F_{\ell}}f(x)e_{\ell}(xr) \]
for $r\in\F_{\ell}$, where $e_{\ell}(xr)=\exp(2\pi ixr/\ell)$. For
an $L^1$ function $f:\R\rightarrow\R$, we define its Fourier
transform to be
\[ \hat{f}(y)=\int_{\R}f(x)e(xy)dx, \]
where $e(xy)=\exp(2\pi ixy)$.

The rest of this article is organized as follows. Heath-Brown's
argument, which converts the problem of bounding character sums to
the problem of bounding the number of solutions to a certain
congruence equation, is summarized in Section \ref{sec:prep}.
Proposition \ref{prop:main} is proved in Section \ref{sec:prop}.
Theorem \ref{thm:main} is then deduced from Proposition
\ref{prop:main} in Section \ref{sec:thm}, and finally Corollary
\ref{cor:main} is obtained in Section \ref{sec:cor}.

%%%%%%%%%%%%%%%%%%%%%%%%%%%%%%%%%%%%%%%%%%%%%%%%%%%%%%%%%%
%%%%%%%%%%%%%%%%%%%% PREPARATIONS %%%%%%%%%%%%%%%%%%%%%%%%
%%%%%%%%%%%%%%%%%%%%%%%%%%%%%%%%%%%%%%%%%%%%%%%%%%%%%%%%%%

\section{Preparations}\label{sec:prep}

In this section, we summarize Heath-Brown's work, setting up the
foundation for the proof of Theorem \ref{thm:main}.

\begin{proposition}\label{prop:hb}
Let $r\geq 2$ be a positive integer and $\epsilon>0$ be real.
Suppose that $\chi\pmod q$ is a primitive Dirichlet character, and
let $H\in (q^{1/(2r)},q]$ be a positive integer. Assume that either
$q$ is cube-free or $r\leq 3$. Let $0\leq N_1<N_2<\cdots<N_J<q$ be
integers with the spacing condition
\[ N_{j+1}-N_j\geq H. \]
Let $\ell\in (q/H,2q/H]$ be a prime. Let $P$ be a parameter
satisfying $2Hq^{-1/(2r)}\leq P\ll Hq^{-1/(2r)}$. Then for some
subset $S\subset\F_{\ell}$ with $|S|=J$,
\[ \sum_{j=1}^J\max_{h\leq H}|S(N_j;h)|^r\ll
q^{1/4+3/(4r)+\epsilon}H^{r-2}(PJ^{1/2}+N(\ell,S,12P)^{1/2}), \]
where $N(\ell,S,12P)$ is defined as in the statement of Proposition
\ref{prop:main}.
\end{proposition}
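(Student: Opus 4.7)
My plan is to follow Heath-Brown's Burgess-style reduction from Section~3 of \cite{heath-brown}. The present proposition is essentially his, up to the form of the diophantine count appearing on the right-hand side (which is what the present paper ultimately improves); the reduction itself is unchanged. The starting point will be the classical Burgess substitution: for a prime $p$ with $(p, q) = 1$ and a shift $b$, the multiplicativity identity
\[ \chi(m + pb) = \chi(p)\,\chi(\overline{p}\, m + b), \]
where $\overline{p}$ denotes the inverse of $p$ modulo $q$, lets me rewrite $S(N_j; h)$ as an average over shorter character sums. I will average over primes $p \in (P, 2P]$ and integers $b \in (0, B]$ for a suitably chosen $B$, which introduces a useful smoothing at the cost of $O(PB)$ boundary error per interval.

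The parameter $\ell$ will enter through the additive structure of the resulting inner sums. Since $\ell \in (q/H, 2q/H]$ is prime and $N_{j+1} - N_j \geq H > q/\ell$, the residues $s_j := N_j \bmod \ell$ are all distinct, and I will take $S := \{s_j : 1 \leq j \leq J\} \subset \F_\ell$, so that $|S| = J$. After raising to the $r$-th power, summing over $j$, and applying Cauchy--Schwarz to expose the bilinear form in $(p, s)$, an $L^2$ expansion will reduce the problem to complete character sums modulo $q$ of size $O(q^{1/2 + \epsilon})$ (via the Weil bound, where the hypothesis ``$q$ cube-free or $r \leq 3$'' is invoked).

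The exceptional configurations for which the Weil bound fails will be precisely those tuples $(p, p', s, s')$ with $p, p' \leq 6P$, $s, s' \in S$, and $p s - p' s' \equiv c \pmod{\ell}$ for some $|c| \leq 12P$; these are counted by $N(\ell, S, 12P)$, and a final Cauchy--Schwarz will deliver the $N(\ell, S, 12P)^{1/2}$ term in the bound, while the generic (Weil) contribution yields the $PJ^{1/2}$ term. The prefactor $q^{1/4 + 3/(4r) + \epsilon} H^{r - 2}$ then emerges from combining the H\"older exponents with the Weil savings; the hypothesis $H > q^{1/(2r)}$ ensures that the admissible range $2H q^{-1/(2r)} \leq P \ll H q^{-1/(2r)}$ for $P$ is nonempty. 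The main obstacle will be the intricate parameter bookkeeping---choosing $P$ and $B$ so that all error terms are absorbed and all exponents line up---but since this has already been worked out in full detail by Heath-Brown, my plan is simply to invoke his Section~3 directly, with the improvement of the present paper entering only downstream via the sharper bound of Proposition~\ref{prop:main}.
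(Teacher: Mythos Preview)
Your overall plan---to invoke Heath-Brown's Section~3 reduction verbatim and only replace the diophantine input downstream---is exactly what the paper does. However, two points in your sketch are incorrect and would derail you if you tried to fill in the details yourself rather than simply citing \cite{heath-brown}.

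First, the set $S$ is \emph{not} $\{N_j \bmod \ell\}$. Your distinctness argument is invalid: from $N_{j+1}-N_j \geq H > q/\ell$ you cannot conclude that the $N_j$ are distinct modulo $\ell$, since $\ell \leq 2q/H$ can be much smaller than $q$ and nothing prevents $N_{j'} - N_j$ from being a multiple of $\ell$. The correct construction (as in the paper, following Heath-Brown) is $M_j := \lfloor N_j \ell / q \rfloor$; the spacing $N_{j+1}-N_j \geq H$ together with $\ell > q/H$ gives $N_{j+1}\ell/q - N_j\ell/q > 1$, so these floors are genuinely distinct, and one takes $S = \{M_1,\dots,M_J\}$. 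The passage from the real inequality $|(N_j - a_1 q)/p_1 - (N_k - a_2 q)/p_2| \leq H/P$ to the congruence $p_2 M_j - p_1 M_k \equiv m \pmod{\ell}$ with $|m| \leq 12P$ relies on this rescaling, not on reduction modulo $\ell$.

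Second, your attribution of the two terms is off. In Heath-Brown's argument the Weil bound is applied once, upstream, and produces the entire prefactor $q^{1/4+3/(4r)+\epsilon}H^{r-2}$ together with a factor $\mathcal{M}^{1/2}$ (his equation~(7)); the quantity $\mathcal{M}$ counts near-coincidences among the averaged starting points and has nothing to do with ``configurations where Weil fails.'' One then splits $\mathcal{M} = \mathcal{M}_1 + \mathcal{M}_2$ according to whether $p_1 = p_2$ or not: the diagonal $\mathcal{M}_1 \ll P^2 J$ yields the $PJ^{1/2}$ term, and the off-diagonal $\mathcal{M}_2 \leq N(\ell, S, 12P)$ yields the other.
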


\begin{proof}
We outline the arguments from \cite{heath-brown}. Using a variant of
Burgess's method Heath-Brown obtained (equation (7) of
\cite{heath-brown})
\begin{equation}\label{hb1}
\sum_{j=1}^J\max_{h\leq H}|S(N_j;h)|^r\ll
q^{1/4+3/(4r)+\epsilon}H^{r-2}\mathcal{M}^{1/2}, \end{equation}
where
$\mathcal{M}$ is the number of tuples $(a_1,a_2,p_1,p_2,N_j,N_k)$
with $p_1,p_2$ primes in $(P,2P]$, $0\leq a_1<p_1$, $0\leq a_2<p_2$,
and
\[ |(N_j-a_1q)/p_1-(N_k-a_2q)/p_2|\leq H/P. \]

In the beginning of Section 4 of \cite{heath-brown}, Heath-Brown
started to bound $\mathcal{M}$ as follows. Split $\mathcal{M}$ as
$\mathcal{M}_1+\mathcal{M}_2$, where $\mathcal{M}_1$ counts
solutions with $p_1=p_2$ and $\mathcal{M}_2$ counts those with
$p_1\neq p_2$. It can be easily seen that (equation (8) of
\cite{heath-brown})
\[ \mathcal{M}_1\ll P^2J. \]
To bound $\mathcal{M}_2$, set $M_j=[N_j\ell/q]$ for each $1\leq
j\leq J$. The spacing condition on $N_j$ implies that
\[ 0\leq M_1<M_2<\cdots<M_J<\ell. \]
Let $S=\{M_1,M_2,\cdots,M_J\}\subset\F_{\ell}$. It is then shown
that (the first displayed equation on page 11 of \cite{heath-brown})
\[ \mathcal{M}_2\leq\sum_{M_j,M_k}\#\{(p_1,p_2,m):|m|\leq
12P,p_2M_j-p_1M_k\equiv m\pmod\ell\}. \]
The right side above is
bounded above by $N(\ell,S,12P)$. Hence \[
\mathcal{M}=\mathcal{M}_1+\mathcal{M}_2\ll P^2J+N(\ell,S,12P). \]
Combining this with \eqref{hb1} completes the proof.
\end{proof}

Heath-Brown's subsequent estimates take advantage of the fact that
$p_1$ and $p_2$ are primes; however, our method is insensitive to
this.

We will also need the following P\'{o}lya-Vinogradov-type mean value
estimate.

\begin{lemma}\label{lem:mean}
Let $r$ be a positive integer and $\epsilon>0$ be real. Suppose that
$\chi\pmod q$ is a primitive Dirichlet character, and let $H\leq q$
be a positive integer. Let $0\leq N_1<N_2<\cdots<N_J<q$ be integers
with the spacing condition
\[ N_{j+1}-N_j\geq H. \]
Then
\[ \sum_{j=1}^J\max_{h\leq H}|S(N_j;h)|^{2}\ll
q(\log q)^2 \] for all $q$, and
\[ \sum_{j=1}^J\max_{h\leq H}|S(N_j;h)|^{2r}\ll
q^{\epsilon}(qH^{r-1}+q^{1/2}H^{2r-1}) \] under the assumption that
either $2\leq r\leq 3$ or $q$ is cube-free.
\end{lemma}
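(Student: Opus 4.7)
The two bounds are handled by different techniques, both rooted in the P\'{o}lya-Vinogradov inequality and the large sieve.

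For the second-moment bound, I would use the Fourier expansion of $\chi$ via the Gauss sum,
\[ S(N_j; h) = \frac{\tau(\chi)}{q} \sum_{a=1}^{q-1} \bar\chi(a) \, e_q(-aN_j) \, g(-a, h), \quad g(a, h) = \sum_{n=1}^h e_q(an), \]
where $|g(a, h)| \ll \min(h, q/|a|)$. Squaring and summing over $j$, the large sieve inequality applied to the $(H/q)$-separated frequencies $N_j/q$, together with Parseval's identity $\sum_a |g(a, h)|^2 = qh$, gives $\sum_j |S(N_j; h)|^2 \ll qh$ for each fixed $h$. To pass from a fixed $h$ to the maximum over $h \leq H$, I would combine a dyadic decomposition with the P\'{o}lya-Vinogradov bound $|T(M)| \ll \sqrt{q} \log q$ for $T(M) = \sum_{n \leq M} \chi(n)$, exploiting the telescoping identity $S(N; h) = T(N+h) - T(N)$. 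This yields the claimed $q(\log q)^2$ bound.

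For the $2r$-th moment bound under the cube-free or $r \leq 3$ hypothesis, I would apply the interpolation
\[ \sum_j \max_h |S(N_j; h)|^{2r} \leq \Bigl(\max_{j, h} |S(N_j; h)|\Bigr)^{2r-2} \cdot \sum_j \max_h |S(N_j; h)|^2, \]
using Burgess's bound $|S(N; h)| \ll h^{1-1/r} q^{(r+1)/(4r^2) + \epsilon}$ to control the pointwise factor; this is where the cube-free or $r \leq 3$ hypothesis is needed. Combining the second-moment bound with Burgess in the regime $H \geq q^{1/(2r)}$ and with the trivial estimate $|S| \leq h$ in the complementary regime yields the two-term bound $q^\epsilon(qH^{r-1} + q^{1/2}H^{2r-1})$, with the transition between the two terms occurring precisely at $H \sim q^{1/(2r)}$, the Burgess threshold.

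The main obstacle is the maximum over $h \leq H$ in the second-moment bound: a direct application of the Fourier/large-sieve approach only controls $\sum_j |S(N_j; h)|^2$ for a single $h$, and one needs to pass to the maximum without losing extra powers of $H$. Once this is dealt with, the $2r$-th moment bound follows by a standard combination of the second-moment estimate with Burgess's inequality in the appropriate ranges.
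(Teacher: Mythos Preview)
Your approach to the second-moment ($r=1$) case is sound and matches the paper's reference to Heath-Brown's Lemma~4: a P\'olya--Vinogradov Fourier expansion together with a large-sieve/duality argument, plus a completion or dyadic device to handle the maximum over $h$, does yield $\sum_j\max_h|S(N_j;h)|^2\ll q(\log q)^2$.

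The $r\ge 2$ part, however, has a genuine gap. Your interpolation
\[
\sum_j\max_h|S(N_j;h)|^{2r}\le\Bigl(\sup_{N,h}|S(N;h)|\Bigr)^{2r-2}\cdot\sum_j\max_h|S(N_j;h)|^2
\]
is too lossy to produce $q^\epsilon(qH^{r-1}+q^{1/2}H^{2r-1})$. Take $r=2$ and $H=q^{1/4}$: the target is $q^{5/4+\epsilon}$, but the trivial pointwise bound gives $(\sup|S|)^{2}\cdot q^{1+\epsilon}\le H^2 q^{1+\epsilon}=q^{3/2+\epsilon}$, and inserting Burgess with parameter $r=2$ is even worse. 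More generally, with the trivial bound your argument yields $q^{1+\epsilon}H^{2r-2}$, which exceeds $qH^{r-1}$ by a full factor of $H^{r-1}$; and with Burgess it only reaches $q^{1/2}H^{2r-1}$ once $H\gg q^{(2r^2-1)/(2r(3r-2))}$, far above $q^{1/(2r)}$. So neither branch of your case split covers the critical range, and in particular the term $qH^{r-1}$ --- which is exactly the term invoked later in the induction in Section~\ref{sec:thm} --- is never recovered.

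The point the paper is making (``the proof there works for arbitrary $r$'') is that the \emph{same} mechanism used for $r=1$ extends: one exploits the $H$-spacing to average over shifts and reduce to $H^{-1}\sum_{N\bmod q}\max_{h\le 2H}|S(N;h)|^{2r}$, removes the maximum in $h$ by Fourier completion, and then bounds the full moment $\sum_{N\bmod q}|S(N;H)|^{2r}\ll q^\epsilon(qH^r+q^{1/2}H^{2r})$ via the Weil bound for complete character sums of rational functions. It is this Weil mean-value estimate --- not Burgess's pointwise inequality --- that requires the cube-free or $r\le 3$ hypothesis, and dividing it by $H$ gives precisely $q^\epsilon(qH^{r-1}+q^{1/2}H^{2r-1})$.
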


The $r=1$ case is Lemma 4 in \cite{heath-brown}. The proof there,
however, works for arbitrary $r$.

%%%%%%%%%%%%%%%%%%%%%%%%%%%%%%%%%%%%%%%%%%%%%%%%%%%%%%%%%%%%%%%%%%
%%%%%%%%%%%%%%%%%%%%%% PROOF OF PROPOSITION %%%%%%%%%%%%%%%%%%%%%%
%%%%%%%%%%%%%%%%%%%%%%%%%%%%%%%%%%%%%%%%%%%%%%%%%%%%%%%%%%%%%%%%%%

\section{Proof of Proposition \ref{prop:main}}\label{sec:prop}

Recall that $N(\ell,S,n)$ is the number of solutions to
\[ as-bt\equiv c\pmod\ell \]
with $1\leq a,b\leq n$, $|c|\leq n$, and $s,t\in S$. If $\ell\leq
n$, then $N(\ell,S,n)$ can be bounded easily as follows. For any
fixed choice of $a,b,s,t$, there are at most $\lceil n/\ell\rceil$
choices for $c$. Therefore
\[ N(\ell,S,n)\ll \ell^{-1}n^3|S|^2 \]
as desired. Henceforth assume that $\ell>n$.

To facilitate the argument we introduce a smooth cutoff
$\phi:\R\rightarrow\R^+$ satisfying the following properties:
\begin{enumerate}
\item $\phi(x)\geq 0$ for every $x\in\R$ and $\phi(x)$ is bounded
away from $0$ for $x\in [-1,1]$;
\item $\hat{\phi}$ is supported in the interval $[-1/10,1/10]$ and
$|\hat{\phi}(y)|\leq 1$ for every $y\in\R$.
\end{enumerate}
Such a function $\phi$ can be easily constructed, for example, by
taking $\phi(x)=(\sin (tx)/tx)^2$ for some appropriate $t>0$.

We denote by $S$ the characteristic function of the set $S$. Since
$\phi(x)\gg 1$ for $x\in [-1,1]$ and $\phi(x)\geq 0$ for all $x$, we
have the bound
\[ T\leq\sum_{1\leq a,b\leq
n}\sum_{s,t\in\F_{\ell}}\sum_{\substack{c\in\Z \\ as-bt\equiv
c\pmod\ell}}S(s)S(t)\phi(c/n). \] Then by orthogonality of additive
characters,
\[ T\leq\frac{1}{\ell}\sum_{1\leq a,b\leq
n}\sum_{s,t\in\F_{\ell}}\sum_{c\in\Z}S(s)S(t)\phi(c/n)\sum_{r\in\F_{\ell}}e_{\ell}(r(as-bt-c)).
\]
Changing the order of summation we get
\[ T\leq\frac{1}{\ell}\sum_{|r|\leq\ell/2}\sum_{1\leq a,b\leq
n}\hat{S}(ar)\hat{S}(-br)\sum_{c\in\Z}\phi(c/n)e_{\ell}(-cr). \] By
Poisson summation,
\[
\sum_{c\in\Z}\phi(c/n)e_{\ell}(-cr)=n\sum_{k\in\Z}\hat{\phi}\left(n\left(k-\frac{r}{\ell}\right)\right).
\]
Since $\hat{\phi}$ is compactly supported in $[-1/10,1/10]$, the
summand on the right above vanishes unless $k=0$ and
$|r|\leq\ell/5n$. Hence
\[
T\leq\frac{n}{\ell}\sum_{|r|\leq\ell/5n}\sum_{1\leq a,b\leq
n}\hat{S}(ar)\hat{S}(-br)\hat{\phi}(-nr/\ell)\ll\frac{n^3|S|^2}{\ell}+\frac{n}{\ell}\sum_{0<|r|\leq\ell/5n}\sum_{1\leq
a,b\leq n}|\hat{S}(ar)\hat{S}(-br)|
\]
since $|\hat{\phi}(y)|\leq 1$ for all $y$. It follows from the
inequality
\[ 2|\hat{S}(ar)\hat{S}(-br)|\leq |\hat{S}(ar)|^2+|\hat{S}(-br)|^2 \]
that
\[
T\ll\frac{n^3|S|^2}{\ell}+\frac{n^2}{\ell}\sum_{0<|r|\leq\ell/5n}\sum_{1\leq
|a|\leq n}|\hat{S}(ar)|^2. \] Note that for any fixed nonzero
$s\in\F_{\ell}$, there are $O(\ell^{\epsilon})$ ways to write
$s\equiv ar\pmod\ell$ with $1\leq |a|\leq n$ and $0<|r|\leq\ell/5n$.
Hence
\[ \sum_{0<|r|\leq\ell/5n}\sum_{1\leq
|a|\leq
n}|\hat{S}(ar)|^2\ll\ell^{\epsilon}\sum_{s\in\F_{\ell}}|\hat{S}(s)|^2\ll\ell^{1+\epsilon}|S|
\]
by Parseval's identity. This gives the desired bound
\[ T\ll\frac{n^3|S|^2}{\ell}+\ell^{\epsilon}n^2|S|, \]
completing the proof of Proposition \ref{prop:main}.

%%%%%%%%%%%%%%%%%%%%%%%%%%%%%%%%%%%%%%%%%%%%%%%%%%%%%%%%%%%%%%%%%%
%%%%%%%%%%%%%%%%%%%%%% PROOF OF THEOREM %%%%%%%%%%%%%%%%%%%%%%%%%%
%%%%%%%%%%%%%%%%%%%%%%%%%%%%%%%%%%%%%%%%%%%%%%%%%%%%%%%%%%%%%%%%%%

\section{Proof of Theorem \ref{thm:main}}\label{sec:thm}

It follows immediately from Propositions \ref{prop:hb} and
\ref{prop:main} that
\[ \sum_{j=1}^J\max_{h\leq H}|S(N_j;h)|^r\ll
q^{1/4+3/(4r)+\epsilon}H^{r-2}\left(PJ^{1/2}+\frac{P^{3/2}J}{\ell^{1/2}}\right).
\]
Recall that $P\sim Hq^{-1/2r}$ and $\ell\sim q/H$. Hence
\begin{equation}\label{4} \sum_{j=1}^J\max_{h\leq H}|S(N_j;h)|^r\ll
q^{1/4+1/(4r)+\epsilon}H^{r-1}J^{1/2}+q^{-1/4+\epsilon}H^rJ.
\end{equation} Before proving Theorem \ref{thm:main} we
remark that the bound above is already nontrivial when
$HJ^{1/2}>q^{1/4+\epsilon}$, and is thus sufficient to deduce
Corollary \ref{cor:main}. However a little more work needs be done
to get Theorem \ref{thm:main} as stated. The arguments here are
analogous to those in Section 5 of \cite{heath-brown}.

We use induction on $r$. The $r=1$ case is simply Lemma
\ref{lem:mean}. Now assume that $r\geq 2$. By a dyadic subdivision,
we may assume that
\[ \max_{h\leq H}|S(N_j;h)|\sim V \]
for some $V$. Hence from \eqref{4},
\[ JV^r\ll q^{1/4+1/(4r)+\epsilon}H^{r-1}J^{1/2}+q^{-1/4}H^rJ. \]
Consider two cases depending on which term on the right side above
dominates. If the first term dominates, then, upon moving $J^{1/2}$
to the left, we get
\[  JV^{2r}\ll q^{1/2+1/(2r)+\epsilon}H^{2r-2}, \]
as desired. If the second term dominates, then
\[ JV^r\ll q^{-1/4+\epsilon}H^rJ, \]
and thus $V\ll q^{-1/(4r)+\epsilon/r}H$. Divide further into two
cases.

If $H>q^{1/2(r-1)}$, then we may use the induction hypothesis with
$r-1$ to deduce that
\[ JV^{2(r-1)}\ll q^{1/2+1/2(r-1)+\epsilon}H^{2r-4}. \]
Hence
\[ JV^{2r}\ll q^{1/2+1/2(r-1)+\epsilon}H^{2r-4}\cdot
q^{-1/(2r)+2\epsilon/r}H^2. \] The desired bound follows from the
inequality $1/2(r-1)-1/(2r)\leq 1/(2r)$ when $r\geq 2$.

If $H\leq q^{1/2(r-1)}$, then we use the conclusion of Lemma
\ref{lem:mean},
\[ \sum_{j=1}^J\max_{h\leq H}|S(N_j;h)|^{2(r-1)}\ll
q^{\epsilon}(qH^{r-2}+q^{1/2}H^{2r-3})\ll q^{1+\epsilon}H^{r-2}, \]
to obtain
\[ JV^{2r-2}\ll q^{1+\epsilon}H^{r-2}. \]
Hence
\[ JV^{2r}\ll q^{1+\epsilon}H^{r-2}\cdot q^{-1/(2r)+2\epsilon/r}H^2.
\]
This again gives the desired bound using the assumption
$H>q^{1/(2r)}$.

%%%%%%%%%%%%%%%%%%%%%%%%%%%%%%%%%%%%%%%%%%%%%%%%%%%%%%%%%%%%%%%%%%
%%%%%%%%%%%%%%%%%%%%%% PROOF OF COROLLARY %%%%%%%%%%%%%%%%%%%%%%%%
%%%%%%%%%%%%%%%%%%%%%%%%%%%%%%%%%%%%%%%%%%%%%%%%%%%%%%%%%%%%%%%%%%

\section{Proof of Corollary \ref{cor:main}}\label{sec:cor}

For any $q^{\epsilon}\leq\ell\leq q/2$, let $A(\ell)$ be the union
of those $I_j$ such that $\ell\leq |I_j|\leq 2\ell$. By a dyadic
subdivision, it suffices to prove that
\[ \sum_{n\in A(\ell)}\chi(n)\ll |A|q^{-\delta} \]
for any $q^{\epsilon}\leq\ell\leq q/2$. Assume that
\[ A(\ell)=I_1'\cup\cdots I_J', \]
where $I_1',\cdots,I_J'$ ($J\leq s$) are disjoint intervals of
length between $\ell$ and $2\ell$. Assume also that $\ell J\geq
|A|q^{-\epsilon/2}$; otherwise the bound is trivial. Then
\begin{equation}\label{lj} \ell J^{1/2}\geq |A|q^{-\epsilon/2}J^{-1/2}\geq
|A|s^{-1/2}q^{-\epsilon/2}\geq q^{1/4+\epsilon/2}. \end{equation}
Write
\[ I_j'=(N_j,N_j+L_j] \]
with $\ell\leq L_j\leq 2\ell$. Without loss of generality, assume
that $0\leq N_1<\cdots<N_J<q$. By the disjointness of the intervals
$I_j'$, we have the spacing condition
\[ N_{j+2}-N_j\geq 2\ell. \]
By Theorem \ref{thm:main},
\[ \sum_{j=1}^J|S(N_j;L_j)|^{2r}\leq\sum_{\substack{j=1\\ j\text{
odd}}}^J\max_{h\leq 2\ell}|S(N_j;h)|^{2r}+\sum_{\substack{j=1\\
j\text{ even}}}^J\max_{h\leq 2\ell}|S(N_j;h)|^{2r}\ll
q^{1/2+1/(2r)+\epsilon}\ell^{2r-2}
\]
for sufficiently large $r$. It then follows from H\"{o}lder's
inequality that
\[ \sum_{j=1}^JS(N_j;L_j)\ll
J^{1-1/(2r)}q^{1/(4r)+1/(4r^2)+\epsilon}\ell^{1-1/r}. \] A simple
computation shows that the right side above is $O(\ell
Jq^{-\delta})$ provided that
\[ \ell J^{1/2}\gg q^{1/4+1/(4r)+\epsilon}. \]
This condition indeed holds by \eqref{lj}, if $r$ is chosen large
enough. Henceforth
\[ \sum_{j=1}^JS(N_j;L_j)\ll \ell Jq^{-\delta}\leq |A|q^{-\delta}
\]
since $|A|\geq\ell J$, completing the proof of Corollary
\ref{cor:main}.

\vspace{5 mm} \textbf{Acknowlegement.} The author would like to
thank his advisor K. Soundararajan for proposing the smoothing
technique used in the proof of Proposition \ref{prop:main} which
greatly simplifies the argument, as well as his useful suggestions
on exposition.

\bibliographystyle{plain}
\bibliography{sumoverunion}{}

\end{document}